\DeclareMathOperator{\tw}{tw}
\newtheorem{theorem}{Theorem}[section]
\newtheorem{conjecture}[theorem]{Conjecture}
\theoremstyle{definition}
\def\N{\mathbb{N}}
\renewcommand{\leq}{\leqslant}
\renewcommand{\geq}{\geqslant}
\renewcommand{\mid}{~|~}
\newlength{\RoundedBoxWidth}
\newsavebox{\GrayRoundedBox}
\newenvironment{GrayBox}[1]%
   {\setlength{\RoundedBoxWidth}{.93\columnwidth}
    \def\boxheading{#1}
    \begin{lrbox}{\GrayRoundedBox}
       \begin{minipage}{\RoundedBoxWidth}}%
   {   \end{minipage}
    \end{lrbox}
    \begin{center}
    \begin{tikzpicture}%
       \node(Text)[draw=black!20,fill=white,rounded corners,inner sep=2ex,text width=\RoundedBoxWidth]
             {\usebox{\GrayRoundedBox}};
        \coordinate(x) at (current bounding box.north west);
        \node [draw=white,rectangle,inner sep=3pt,anchor=north west,fill=white]
        at ($(x)+(6pt,.75em)$) {\boxheading};
    \end{tikzpicture}
    \end{center}}
\newenvironment{defproblemx}[1]{\noindent\ignorespaces%
                                \FrameSep=6pt%
                                \parindent=0pt%
                \begin{GrayBox}{#1}%
                \begin{tabular*}{\columnwidth}{!{\extracolsep{\fill}}@{\hspace{.1em}} >{\itshape} p{1.5cm} p{0.86\columnwidth} @{}}%
            }{
                \end{tabular*}%
                \end{GrayBox}%
                \ignorespacesafterend
            }
\DeclareMathOperator{\treealpha}{tree\textnormal{-}\alpha}
\title{Tree-independence number VII. Excluding  a star.}
\author{
Maria Chudnovsky\thanks{Princeton University, Princeton, NJ, USA. 
Supported by NSF Grant DMS-2348219, NSF Grant CCF-2505100, AFOSR grant FA9550-22-1-0083 and a Guggenheim Fellowship.}
\and Jadwiga Czyżewska\thanks{University of Warsaw, Poland (\texttt{j.czyzewska@mimuw.edu.pl}).
Supported by Polish National Science Centre SONATA BIS-12 grant number 2022/46/E/ST6/00143.}
\and Marcin Pilipczuk\thanks{University of Warsaw, Poland (\texttt{m.pilipczuk@mimuw.edu.pl}).
Supported by Polish National Science Centre SONATA BIS-12 grant number 2022/46/E/ST6/00143.}
\and Paweł Rzążewski\thanks{Warsaw University of Technology \& University of Warsaw, Poland (\texttt{pawel.rzazewski@pw.edu.pl}). Supported by the National Science Centre grant 2024/54/E/ST6/00094.}}
\begin{document}
\date{}
\maketitle

\begin{abstract}
We prove that for every fixed integer $s$ and every planar graph $H$,
the class of $H$-induced-minor-free and $K_{1,s}$-induced-subgraph-free graphs has polylogarithmic tree-independence number.
This is a weakening of a conjecture of Dallard, Krnc, Kwon, Milani\v{c}, Munaro, \v{S}torgel, and Wiederrecht.

\end{abstract}

\section{Introduction}

Tree decompositions and treewidth are among the most influential concepts in structural graph theory.
Intuitively, a tree decomposition is a hierarchical decomposition of a graph $G$ into sets called \emph{bags}.
If these sets are all small (i.e., $G$ has small treewidth), then $G$ is ``tree-like'' and thus ``simple;'' see \cref{sec:defs} for a formal definition.

Since its birth, the notion of treewidth was closely related to graph minors.
(A graph $H$ is a~minor of a graph $G$ if it can be obtained from $G$ by deleting vertices and edges, and contracting edges.)
This close relation is witnessed by the following landmark result of Robertson and Seymour~\cite{GMV}, usually referred to as \emph{Grid Minor Theorem}.

\begin{theorem}[Robertson, Seymour~\cite{GMV}]\label{thm:gridminor}
    For every planar graph $H$ there exists an integer $c_H$ such that every graph that does not contain a minor isomorphic to $H$ has treewidth at most $c_H$.
\end{theorem}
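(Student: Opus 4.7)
The plan is to reduce the statement to the classical ``Grid Minor Theorem proper''  and then sketch the structural proof of that.

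As a first step I would observe that every planar graph $H$ is a minor of some $r \times r$ grid $\Gamma_r$, where $r = r(H)$ depends only on $H$. This is an elementary topological argument: take a straight-line planar embedding of $H$ (which exists by Fáry's theorem), scale it so that the vertices land in general position on grid points, and route each edge along a rectilinear path avoiding the other vertices; if the grid is fine enough, we can inflate each vertex of $H$ into a small connected block of $\Gamma_r$ and reconnect the routed edges to it. Given this, it suffices to establish the statement in the stronger ``grid'' form: there is a function $g \colon \mathbb{N} \to \mathbb{N}$ such that every graph of treewidth at least $g(r)$ contains $\Gamma_r$ as a minor. Setting $c_H := g(r(H))$ then finishes the proof.

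To prove this stronger statement, I would proceed via the well-known duality between treewidth and tangles (or equivalently brambles). A graph $G$ has treewidth at least $k$ if and only if it admits a tangle of order $\Theta(k)$: a coherent orientation of all low-order vertex separations. From a tangle of sufficiently large order one extracts a \emph{well-linked set}: a vertex set $X$ such that for every roughly balanced bipartition of $X$, Menger's theorem provides many vertex-disjoint paths between the two sides inside $G$. The grid minor is then built iteratively: repeatedly use Menger to extract large families of disjoint ``horizontal'' and ``vertical'' paths, uncross them, and cut and reroute to realize the grid adjacency pattern. Planarity-style arguments (the flat wall theorem being the heavy-weight version) are used to argue that either a large wall (a subdivision of a grid) is already embedded, or we can reduce to a smaller but still highly linked subproblem.

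The main obstacle is controlling the loss at each step, both in the number of paths retained and in the order of the remaining tangle. The original Robertson--Seymour argument yields only a tower-type bound for $g$; getting polynomial bounds (Chekuri--Chuzhoy and follow-ups) requires a delicate cut-matching game and careful accounting of path congestion during the uncrossing steps. In our setting, where only the existence of \emph{some} finite $c_H$ is needed, I would be content with the qualitative Robertson--Seymour route through tangles, havens, and the flat wall theorem; the quantitatively sharp version is not required by the applications in the present paper.
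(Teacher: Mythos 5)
This statement is \cref{thm:gridminor}, the Grid Minor Theorem, which the paper does not prove at all: it is imported as a black box from Robertson and Seymour's \emph{Graph Minors V}, so there is no in-paper proof to compare your attempt against. Judged on its own terms, your outline correctly identifies the two standard reduction steps found in the literature: first, every planar graph $H$ is a minor of a sufficiently large grid (indeed the paper itself uses the hexagonal-grid variant of this fact as \cref{HtoW}), so it suffices to show that large treewidth forces a large grid minor; second, the modern route to that statement goes through the duality between treewidth and tangles/brambles, extracts a well-linked set, and builds the grid from large families of disjoint paths obtained via Menger's theorem.

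However, as a proof your text has a genuine gap: the entire combinatorial core of the theorem --- turning a well-linked set into crossing families of ``horizontal'' and ``vertical'' paths and actually realizing the grid adjacency pattern from them --- is only named, not carried out, and this is precisely where all the difficulty of the Robertson--Seymour (and later Chekuri--Chuzhoy) arguments lives. There is also a misattribution: the flat wall theorem is not an ingredient in the proof of the Grid Minor Theorem; the dependence goes the other way, with the grid/wall theorem being used to prove the flat wall theorem. Since the paper only ever invokes \cref{thm:gridminor} as motivation (the actual proof of \cref{thm:main} uses \cref{HtoW}, \cref{thm:constellation}, \cref{thm:hajebi}, and \cref{thm:treeaplha} instead), citing the result, as the authors do, is the appropriate treatment here; if you do want a self-contained proof, you would need to follow one of the published arguments in full rather than gesture at its structure.
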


On the other hand, any class that does not exclude a planar graph contains all planar graphs which have unbounded treewidth.
Thus, \cref{thm:gridminor} provides a full characterization of minor-closed classes that have bounded treewidth.

While graphs that exclude some fixed graph as a minor are necessarily sparse,
it turns out that tree decompositions can also find application in the study of well-behaved classes of dense graphs.
A class of graphs is {\em hereditary} if it is closed under vertex deletion.
Let $G$ and $H$ be graphs.
We say that $H$ is an \emph{induced subgraph} of $G$ if it can be obtained from $G$ by removing vertices.
If $H$ is not an induced subgraph of $G$, then $G$ is $H$-free.
We say that $H$ is an {\em induced minor} of $G$ if
$H$ can be obtained from an induced subgraph of $G$ by contracting edges (and repeatedly deleting parallel edges obtained in the process).

In recent years a lot of attention attention was devoted to the study of treewidth of hereditary graph classes.
Again, the question is the same: Which substructures should one exclude to obtain a class of bounded treewidth?
Despite significant progress on this question~\cite{AbrishamiAlecuHajebiSpirkl2025BasicObstructionsFiniteH,
  HajebiSpirkl2025NonAdjacentNeighborsHole,AbrishamiAlecuHajebiSpirkl2024ExcludingForest,  AbrishamiAlecuHajebiSpirkl2024BasicObstructions,AbrishamiHajebiSpirkl2022ThreePathConfigurations, BHKM}, we are still quite far from a full resolution.
However, the answer is known if we additionally assume that the maximum vertex degree is bounded.
Indeed, Korhonen~\cite{korhonen2023grid} proved the following analogue of \cref{thm:gridminor}, which was earlier conjectured by Aboulker et al.~\cite{aboulker}.

\begin{theorem}[Korhonen~\cite{korhonen2023grid}]\label{thm:Deltagridminor}
    For every integer $\Delta$ and a planar graph $H$ there exists an integer $c_{\Delta,H}$ such that every graph of maximum degree at most $\Delta$ that does not contain an induced minor isomorphic to $H$ has treewidth at most $c_{\Delta,H}$.
\end{theorem}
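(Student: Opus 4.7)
The plan is to reduce the statement to the classical Grid Minor Theorem (\cref{thm:gridminor}) by a ``minor-to-induced-minor'' refinement that crucially exploits the bounded-degree hypothesis. The key observation is that every planar graph $H$ occurs as an induced minor of a sufficiently large $k\times k$ grid; thus it suffices to prove: for every $k$ and $\Delta$ there exists $N=N(k,\Delta)$ such that every graph $G$ of maximum degree at most $\Delta$ containing the $N\times N$ grid as an (ordinary) minor also contains the $k\times k$ grid as an induced minor. Combined with \cref{thm:gridminor} applied to the $N\times N$ grid, this yields the desired constant $c_{\Delta,H}$.

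The first step is to fix a minor model $(B_{i,j})_{1\le i,j\le N}$ of the $N\times N$ grid in $G$ and to normalize it. For each branch set $B_{i,j}$, pick one edge to each of its grid-neighbors, and mark the (at most four) endpoints in $B_{i,j}$ as ports; since $B_{i,j}$ is connected, one may replace it by a minimal connected subgraph (a Steiner tree) on its ports. The second step is to leverage $\Delta(G)\le\Delta$ to control interactions between branch sets: within a bounded-degree graph, the ball of radius $r$ around any vertex has size at most $\Delta^{O(r)}$, so once the Steiner trees are of bounded radius, each branch set has only boundedly many ``nearby'' branch sets that it can possibly touch.

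The main obstacle — and the technical heart of the argument — is the presence of \emph{chord edges}: edges between $B_{i,j}$ and $B_{i',j'}$ with $(i,j)(i',j')$ not a grid edge, which prevent the model from being induced. Here the bounded-degree hypothesis is essential: each branch set can create chords only with a bounded number of other branch sets, and moreover the chord pattern between two branch sets is drawn from a bounded alphabet. A product Ramsey / sparsification argument on the rows and columns of the $N\times N$ grid then extracts a $k\times k$ subgrid in which no chord edges occur between the selected branch sets, provided $N$ is large enough relative to $k$ and the resulting Ramsey bounds. This produces an induced $k\times k$ grid minor of $G$, hence an induced model of $H$, contradicting the assumption on $G$ and forcing $\tw(G)\le c_{\Delta,H}$.

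I expect the hardest part of a rigorous write-up to be quantifying this cleaning step: branch sets can \emph{a priori} have unbounded ``reach'', so one must iterate the trimming together with the Ramsey step — alternately shrinking branch sets and passing to subgrids — until both the local structure is tame and the chord pattern stabilizes. Making the interplay between these two reductions terminate with an explicit, finite $N(k,\Delta)$ is exactly the delicate combinatorial core of Korhonen's argument.
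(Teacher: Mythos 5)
First, a point of context: the paper does not prove this statement at all. It is Korhonen's grid-induced-minor theorem, imported as a black box from \cite{korhonen2023grid} and used only as motivation for \cref{The Conjecture}; there is no in-paper proof to compare against, so your proposal has to stand on its own as a proof of Korhonen's theorem. Your outer reduction is sound: every planar $H$ is an induced minor of a large enough grid (cf.\ \cref{HtoW}), induced minors compose, and \cref{thm:gridminor} supplies the raw $N\times N$ grid minor once the treewidth exceeds the corresponding constant. So the theorem does reduce to the single claim that, in a graph of maximum degree $\Delta$, an $N(k,\Delta)\times N(k,\Delta)$ grid minor can be upgraded to a $k\times k$ induced grid minor.

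That claim, however, is the entire content of the theorem, and your sketch of it has genuine gaps --- which, to your credit, you partly flag yourself. Two specific assertions do not hold as stated. First, ``once the Steiner trees are of bounded radius, each branch set has only boundedly many nearby branch sets that it can possibly touch'': nothing bounds the radius of a branch set. A minimal connected subgraph joining four ports can be an arbitrarily long path (e.g.\ in a heavily subdivided grid every branch set is forced to be long), and bounded degree controls the number of neighbours per \emph{vertex}, not per \emph{branch set}; so the conflict graph recording chords between branch sets need not have bounded degree, and the ``bounded alphabet'' of chord patterns between two branch sets is not bounded either. Second, the product-Ramsey extraction is not a one-shot argument on a fixed conflict structure: to pass from the $N\times N$ model to a $k\times k$ submodel you must absorb the branch sets of the discarded rows and columns into the surviving ones to preserve connectivity and adjacency, and this absorption creates chords that were not present before the extraction. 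This is exactly why you are driven to the ``alternately shrink and re-Ramsey'' loop, and you explicitly leave open whether that loop terminates with a finite $N(k,\Delta)$. Without such an argument, what you have is a correct reduction of the theorem to itself plus \cref{thm:gridminor}; the actual proof in \cite{korhonen2023grid} is a substantial piece of work and is not a routine repair of a grid-minor model. If you want to pursue this route, the place to start is to first force the branch sets to be \emph{induced geodesic-like} paths (so that bounded degree really does bound their interaction with any fixed ball), which is closer to how the known proof exploits the degree hypothesis.
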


Another way of dealing with dense graphs is to redefine how we measure the quality of a tree decomposition.
Instead of saying that a graph is ``simple'' if it has a tree decomposition where each bag is small,
we can instead ask for tree decompositions where every bag induces a subgraph of ``simple structure.''
For example chordal graphs are precisely the ones that admit a tree decomposition where every bag is a clique.
This leads to the notion of \emph{tree-independence number}, another graph parameter associated with tree decompositions, introduced independently by Yolov~\cite{DBLP:conf/soda/Yolov18} and by Dallard, Milani\v{c}, and \v{S}torgel~\cite{dms2}.
Intuitively, the tree-independence number of $G$, is the minimum $k$ such that $G$ has a tree decomposition where no bag contains $k+1$ pairwise non-adjacent vertices.
For example, aforementioned chordal graphs are precisely graphs with tree-independence number 1.

Much of the research on tree-independence number revolves around trying to characterize graph families where this parameter is bounded, or at least grows slowly as a function of the size of the graph.
In this spirit, Dallard, Krnc, Kwon, Milani\v{c}, Munaro, \v{S}torgel, and Wiederrecht~\cite{DBLP:journals/corr/abs-2402-11222} suggested the following ``dense'' analogue of \cref{thm:Deltagridminor}.
(For integers $s,t$, by $K_{s,t}$  we denote the complete bipartite graph with sides of a bipartition of size $s$ and $t$.)
 

\begin{conjecture}[\cite{DBLP:journals/corr/abs-2402-11222}] \label{The Conjecture}
For every integer $s$ and every planar graph $H$ there exists an integer $c_{s,H}$
such that every graph  which is $H$-induced minor-free and $K_{1,s}$-free
has tree independence number at most $c_{s,H}$.
\end{conjecture}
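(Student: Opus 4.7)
The plan is to reduce \cref{The Conjecture} to a purely structural statement about $H$-induced-minor-free graphs, and then attack that statement via a balanced-separator argument.

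\textbf{Reduction to $k$-dominated tree decompositions.} Call a tree decomposition of $G$ \emph{$k$-dominated} if every bag $X_t$ is contained in $N_G[v^t_1] \cup \dots \cup N_G[v^t_k]$ for some $k$ vertices $v^t_1,\dots,v^t_k$ of $G$. I claim that if $G$ is $K_{1,s}$-free and admits a $k$-dominated tree decomposition, then its tree-independence number is at most $k(s-1)$. Indeed, for any independent set $I$ and any vertex $v$: if $v \in I$ then $|I \cap N_G[v]| = 1$; if $v \notin I$ then $I \cap N_G(v)$ is an independent set of neighbors of $v$, so $|I \cap N_G[v]| \le s-1$, for otherwise $v$ together with $s$ such vertices would form an induced $K_{1,s}$. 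Summing over the $k$ dominators of a bag gives $|I \cap X_t| \le k(s-1)$. Hence it suffices to prove the following purely structural claim: every $H$-induced-minor-free graph $G$ admits a $k(H)$-dominated tree decomposition. Setting $c_{s,H} := k(H)(s-1)$ then establishes \cref{The Conjecture}.

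\textbf{Recursive construction.} To prove the structural claim, I would induct on $|V(G)|$ and in the inductive step look for a \emph{balanced dominating separator}: a set $S = N_G[v_1] \cup \dots \cup N_G[v_k]$ such that every component of $G-S$ has at most $2|V(G)|/3$ vertices. Given such $S$, standard glueing of the tree decompositions supplied inductively for each component (with $S$ adjoined to every bag) produces a $k$-dominated tree decomposition of $G$. To produce the separator, I would argue by contradiction: if no $k$ closed neighborhoods form a balanced separator, then by a tangle/bramble argument $G$ contains a highly well-linked structure, and, mirroring the strategy behind Korhonen's proof of \cref{thm:Deltagridminor}, one attempts to convert this well-linked structure into an induced model of an arbitrary planar graph --- in particular an induced-minor copy of $H$, contradicting $H$-induced-minor-freeness.

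\textbf{Main obstacle.} The decisive difficulty is the induced-minor extraction in graphs of unbounded maximum degree. Korhonen's argument for \cref{thm:Deltagridminor} uses bounded degree crucially to control the non-edges of the induced model, and in our setting vertices can have arbitrarily large, densely-connected neighborhoods that obstruct any na\"{i}ve extraction. This is precisely where the present paper's polylogarithmic bound originates: the existing induced-minor-extraction tools yield separators covered by polylogarithmically, not boundedly, many closed neighborhoods. Pushing this to a constant would require a genuinely new extraction mechanism that uses the $K_{1,s}$-free hypothesis \emph{inside} the argument --- for instance, invoking Ramsey (a $K_{1,s}$-free graph on $n$ vertices contains a clique of size $\Omega(n^{1/(s-1)})$) to find large induced cliques inside dense neighborhoods and contracting each such clique into a single model-vertex of the induced grid minor, while simultaneously preserving the required non-edges against a dense background. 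Devising such an extraction, or alternatively proving the structural claim about $H$-induced-minor-free graphs by some direct route, is, in my view, the substantive new ingredient that separates the polylogarithmic upper bound from the constant bound conjectured here.
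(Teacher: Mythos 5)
First, a point of order: \cref{The Conjecture} is not proved in this paper. It is an open conjecture of Dallard et al., and the paper explicitly proves only the polylogarithmic weakening \cref{thm:main}. So there is no "paper's own proof" to match; any complete argument you gave would be a new result. Your proposal is not such an argument: it is a reduction plus an honest admission that the key step is missing. The reduction itself is sound and worth recording --- if $G$ is $K_{1,s}$-free and every bag is covered by $k$ closed neighborhoods, then every bag has independence number at most $k(s-1)$ (each dominator $v_i$ contributes at most $s-1$ independent vertices from $N(v_i)$, or just itself). But this only shifts the entire difficulty onto the structural claim that every $H$-induced-minor-free graph admits a $k(H)$-dominated tree decomposition, equivalently a balanced separator covered by $k(H)$ closed neighborhoods. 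That claim is not known; it is essentially the "dominated balanced separator" program pursued in the induced-subgraphs-and-tree-decompositions literature, and producing an induced model of $W_{r\times r}$ from the failure of such separators in unbounded-degree graphs is exactly the obstruction you name. (Also, a small error in your closing speculation: a $K_{1,s}$-free graph on $n$ vertices need not contain a large clique --- the edgeless graph is $K_{1,s}$-free; Ramsey only gives large cliques inside \emph{neighborhoods}.) So the proposal has a genuine, self-acknowledged gap and does not establish the statement.

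It is worth contrasting your plan with how the paper obtains the weaker polylogarithmic bound, since the two routes use the $K_{1,s}$-freeness at opposite ends. You defer it to the very last step (converting a dominated bag into a low-independence bag), which forces you to prove a domination statement about \emph{all} $H$-induced-minor-free graphs. The paper instead uses $K_{1,s}$-freeness up front, twice: via \cref{thm:constellation} to exclude $K_{t,t}$ as an induced minor (a $(1,s)$-constellation is essentially a $K_{1,s}$), and via \cref{Ramsey} to bound the maximum degree by $\omega^s$, hence to make $G$ $\omega^s$-separable. This reduces everything to \cref{thm:hajebi}, giving $\tw(G)\leq \mathrm{poly}(\omega)$, and the final conversion to tree-independence number goes through \cref{thm:treeaplha} --- which is precisely where the unavoidable $\log^{c} n$ factor enters. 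If you want to pursue your route, the honest formulation of the open problem is your structural claim about dominated separators in $H$-induced-minor-free, $K_{1,s}$-free graphs; proving it would indeed resolve \cref{The Conjecture}, but nothing in your sketch (or in the paper) currently does so.
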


The conjecture has been confirmed only  for very restricted cases~\cite{DBLP:journals/corr/abs-2402-11222,HMV25,DBLP:journals/corr/abs-2506-08829, ti3}. In this short note we prove a polylogarithmic version of \cref{The Conjecture}.

\begin{restatable}{theorem}{thmmain}\label{thm:main}
For every integer $s$ and every planar graph $H$ there exists an constant $c_{s,H}$
such that every $n$-vertex graph which is $H$-induced minor-free and $K_{1,s}$-free
has tree-independence number at most $\log^{c_{s,H}} n$.
\end{restatable}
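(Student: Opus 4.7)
The plan is to prove the theorem by constructing a tree decomposition of $G$ via iterated extraction of balanced separators with small independence number. Concretely, I aim to prove the following hereditary claim: there is a constant $c_0 = c_0(s, H)$ such that every induced subgraph $G'$ of $G$ on $m \ge 2$ vertices admits a vertex set $X \subseteq V(G')$ with $\alpha(G'[X]) \le \log^{c_0} m$ and such that every connected component of $G' - X$ has at most $\lfloor 2m/3 \rfloor$ vertices. Granted this, a standard top-down recursion---take $X$ as the root bag, recursively decompose each component $C$ of $G' - X$, and insert $X \cap N(C)$ into every bag of the sub-decomposition---produces a tree decomposition of $G$ with tree-independence number $O(\log^{c_0+1} n)$, establishing \cref{thm:main}.

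To find the separator $X$, I would combine neighborhood-based and BFS-layering techniques. The key starting observation is that for any vertex $v \in V(G')$, the closed neighborhood $N[v]$ has independence number at most $s$, since $G'$ is $K_{1,s}$-free. This gives a small-$\alpha$ candidate for $X$, which however need not be a balanced separator. To force balance, I would iteratively augment $X$ by adding closed neighborhoods $N[v_1], N[v_2], \ldots$ of suitably chosen vertices inside the current ``big'' component of $G' - X$, where each $v_i$ is picked to at least halve the size of that component. Each such step increases $\alpha(X)$ by at most $s$, so if the iteration terminates after $k = O(\log m)$ steps we obtain a balanced separator with $\alpha(X) \le O(s \log m)$.

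The main obstacle is bounding the number of iterations $k$. Not every graph admits a vertex whose removal splits it into balanced pieces (e.g., $K_n$), so ``centroid'' augmentation may fail to halve the big component at some stage. Geometrically, if the iteration is stuck then BFS from $v_1$ is very ``deep'' in the sense that much of the graph lies at high BFS-distance from $v_1$. The BFS paths of length $r$ from $v_1$ to vertices in far layers then yield a large induced subdivision of a ``spider'' rooted at $v_1$. The $K_{1,s}$-free condition controls horizontal interactions between these paths---a single vertex is adjacent to only $s-1$ pairwise non-adjacent vertices---so a Ramsey-type pigeonhole on the family of paths refines the spider into an induced subdivision of a large wall-like gadget. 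Since $H$ is planar, a sufficiently large wall contains $H$ as an induced minor, contradicting the hypothesis that $G$ excludes $H$ as an induced minor.

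The hardest ingredient will be this last extraction step: making the path-cleanup precise, and in particular ensuring that the induction closes with a single polylog exponent rather than suffering polylog blowup at every recursive level. I expect that the cleanest formulation strengthens the hereditary claim by also recording structural information about the separator---e.g., a layered witness---so that the induced-minor extraction step can exploit this inductive structure rather than re-deriving it from scratch.
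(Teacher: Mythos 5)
There is a genuine gap, and it sits exactly where you place the ``hardest ingredient.'' Your plan reduces everything to the claim that if the neighborhood-augmentation iteration fails to produce a balanced separator of small independence number, then one can extract $W_{r\times r}$ (hence $H$) as an induced minor. That claim \emph{is} the theorem: it is the dense analogue of the induced grid minor theorem (\cref{thm:Deltagridminor}), and no elementary BFS-plus-Ramsey argument is known for it even in the bounded-degree case, where Korhonen's proof required substantial new machinery. Moreover, the specific mechanism you propose is backwards. A deep BFS tree does not obstruct balanced separation---if many vertices lie at large BFS-distance from $v_1$, then some intermediate BFS layer is itself a separator, and a single long induced path has arbitrarily deep BFS but contains no spider, let alone a wall. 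The hard case is the opposite one: the graph is \emph{shallow} from every root (e.g.\ built from large cliques glued in a diameter-$2$ pattern), neighborhoods fail to halve components, and yet no planar structure is visible. Your sketch gives no handle on this case, and the step ``Ramsey-type pigeonhole refines the spider into an induced subdivision of a wall'' is not a cleanup step but the entire content of the result; note also that an induced subdivision of a star (a spider) is nowhere near a wall, and passing from many long paths to a grid-like induced minor is precisely what fails in general graphs.

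For comparison, the paper does not construct a decomposition at all. It first uses $K_{1,s}$-freeness to rule out $(1,s)$-constellations and invokes \cref{thm:constellation} to conclude that $G$ excludes $K_{t,t}$ as an induced minor; then it uses Ramsey (\cref{thm:ramsey}) to bound the maximum degree by $\omega^s$, which makes $G$ $\omega^s$-separable; then Hajebi's \cref{thm:hajebi} gives $\tw(G)\le(2\omega^s(\omega+1))^d$, i.e.\ treewidth polynomial in the clique number; and finally the equivalence of \cref{thm:treeaplha} converts ``treewidth polynomial in $\omega$'' into polylogarithmic tree-independence number. All three black boxes are recent, nontrivial results, and the polylogarithmic (rather than constant) bound in \cref{thm:main} enters only through \cref{thm:treeaplha}. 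If your direct separator construction worked, it would likely prove \cref{The Conjecture} itself, which the paper states is wide open; as written, the proposal does not close.
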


\section{Notation and tools}\label{sec:defs}

\paragraph{Graphs.}
An \emph{independent set} is a subset of vertices of $V(G)$ which are pairwise non-adjacent.
The \emph{independence number} of set $A \subseteq V(G)$, denoted by $\alpha(A)$, is the size of the largest independent set in $G[A]$.

A \emph{clique} in $G$ is a set of vertices of $G$ that are pairwise adjacent. The \emph{clique number} of a~graph $G$, denoted by $\omega(G)$,
is the number of vertices in a largest clique of $G$.

We will use the following bound for the off-diagonal Ramsey number.
\begin{theorem}[Ramsey~\cite{Ramsey}, see also Erd{\H o}s-Szekeres~\cite{ErdosSzekeres:1935:ACombinatorialProblemInGeometry}] \label{Ramsey}
\label{thm:ramsey}
For all $s,t \in \N$, every graph on at least $t^s$ vertices has either a clique of cardinality $t$ or an independent set  of cardinality $s+1$.
\end{theorem}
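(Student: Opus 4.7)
The plan is to reduce the theorem, via the standard equivalence between tree-independence number and balanced separators, to showing that every $m$-vertex induced subgraph $G'$ of $G$ admits a $\tfrac{2}{3}$-balanced separator $S$ with $\alpha(G'[S]) \le \log^{c} m$ for some constant $c = c(s,H)$; iterating this gives $\treealpha(G) \le \log^{O(1)} n$, absorbing an extra $O(\log n)$ factor from the iteration into the exponent. So the entire burden is to produce polylog-$\alpha$ balanced separators.

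To find such a separator in $G'$, I would run a BFS from an arbitrary vertex $v$, producing layers $L_i = \{u : \dist(v,u) = i\}$ and cumulative balls $B_r(v) = L_0 \cup \dots \cup L_r$. The dichotomy I propose is: either some ball $B_r(v)$ of polylogarithmic radius is already $\tfrac{2}{3}$-balanced and has polylogarithmic independence number — in which case it is the desired separator — or every ball of polylogarithmic radius is tiny compared to $m$, which means the BFS traverses many non-trivial layers. In the first (``compact'') case, the polylog bound on $\alpha(B_r(v))$ would come from iterating a Ramsey-style argument using \cref{thm:ramsey} together with $K_{1,s}$-freeness: grouping an independent set in $L_i$ by BFS-parents in $L_{i-1}$, each parent can have fewer than $s$ children in the independent set, and combining this with a clique/independent-set dichotomy in the parent layer should control the growth of $\alpha$ across layers at a controlled rate. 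In the second (``spread'') case, the large number of non-trivial layers should allow one to extract a large planar grid as an induced minor of $G'$. Since $H$ is planar, a sufficiently large grid contains $H$ as a minor (by \cref{thm:gridminor} applied in reverse), and one verifies that the extracted minor can be upgraded to an induced one — contradicting $H$-induced-minor-freeness.

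The principal obstacle is the spread case, specifically producing a large \emph{induced} grid minor from the BFS layering. Standard minor-extraction techniques yield only ordinary minors, and upgrading them to induced minors is subtle in the presence of large cliques, which the hypotheses do not forbid — $K_{1,s}$-freeness rules out large independent sets in neighbourhoods but permits arbitrarily large cliques. Hence $K_{1,s}$-freeness must be applied delicately via \cref{thm:ramsey}: whenever a layer is ``too large'', Ramsey forces either a large clique (which can be contracted into a single branch set of the grid) or a large independent set (which supplies parallel ``rails'' for horizontal grid edges), while the BFS edges between consecutive layers supply vertical grid edges. Showing that these contractions and path extractions can be performed consistently across many layers without creating parasitic adjacencies between branch sets that should be non-adjacent in the grid — thereby preserving the \emph{induced}-ness — is where I expect most of the technical work to lie, and is precisely the step at which the two excluded substructures must be combined.
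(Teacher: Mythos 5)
Your proposal does not address the statement at hand. The statement labelled \cref{thm:ramsey} is the classical Ramsey/Erd\H{o}s--Szekeres bound: every graph on at least $t^s$ vertices contains a clique of cardinality $t$ or an independent set of cardinality $s+1$. This is a self-contained combinatorial fact about arbitrary graphs; it involves no planar graph $H$, no induced minors, no tree decompositions, and no $K_{1,s}$-freeness hypothesis. What you have sketched instead is an attack on the paper's main result (\cref{thm:main}) via balanced separators, BFS layerings, and grid-minor extraction. Worse, your sketch explicitly \emph{invokes} \cref{thm:ramsey} twice as a tool (``iterating a Ramsey-style argument using \cref{thm:ramsey}'', ``Ramsey forces either a large clique \ldots or a large independent set''), so read as a proof of \cref{thm:ramsey} itself it is circular. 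Nothing in your text establishes the clique/independent-set dichotomy for a graph on $t^s$ vertices.

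For the record, the paper does not prove this statement either --- it is quoted from the literature --- but a complete proof is short. Let $R(t,s+1)$ denote the least $N$ such that every $N$-vertex graph has a clique of size $t$ or an independent set of size $s+1$; one checks the base cases and the recurrence $R(t,s+1)\le R(t-1,s+1)+R(t,s)$ (pick any vertex $v$; either its neighbourhood has at least $R(t-1,s+1)$ vertices or its non-neighbourhood has at least $R(t,s)$ vertices, and in either case one finishes by induction by adding $v$ to the structure found). The induction closes because $(t-1)^s+t^{s-1}\le (t-1)t^{s-1}+t^{s-1}=t^s$. If you intend to pursue your separator-based programme, it belongs to the proof of \cref{thm:main}, not here, and there you would be re-deriving a result the paper obtains by citing \cref{thm:constellation}, \cref{thm:hajebi}, and \cref{thm:treeaplha}.
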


An $r\times r$ hexagonal grid is denoted as $W_{r\times r}$.
The following result is folklore, see e.g. \cite[Theorem 12]{campbell2024treewidthhadwigernumberinduced}.
\begin{theorem} \label{HtoW}
For every planar graph $H$ there exist $r \in \N$ such that $H$ is an induced minor of
$W_{r \times r}$.
\end{theorem}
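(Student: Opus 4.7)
The plan is to prove Theorem~\ref{thm:main} by strong induction on $n = |V(G)|$. By Theorem~\ref{HtoW}, we may assume that $H = W_{r \times r}$ for the integer $r = r(H)$; throughout, $G$ is an $n$-vertex $K_{1,s}$-free, $W_{r\times r}$-induced-minor-free graph. The class of such graphs is hereditary, so the induction applies cleanly when we restrict to induced subgraphs of $G$.

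The core step I would isolate is a \emph{balanced $\alpha$-separator lemma}: there is a constant $c_0 = c_0(s,r)$ such that $G$ contains a set $S \subseteq V(G)$ with $\alpha(G[S]) \le \log^{c_0} n$ and such that every connected component of $G - S$ has at most $3n/4$ vertices. Given the lemma, a tree decomposition is built in the standard recursive way: place $S$ in the root bag and, for each component $C$ of $G - S$, attach a tree decomposition of $G[V(C)]$ obtained by induction, after augmenting each of its bags with $N_G(V(C)) \cap S$. The recursion has depth $O(\log n)$ since each step shrinks the graph to $\le 3n/4$ vertices, and each level contributes at most $\log^{c_0} n$ to the independence number of any bag (using $\alpha(A \cup B) \le \alpha(A) + \alpha(B)$). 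Therefore tree-$\alpha(G) = O(\log^{c_0 + 1} n)$, which yields the theorem after absorbing hidden constants into the exponent.

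For the separator lemma itself I would run a BFS from an arbitrary vertex $v_0 \in V(G)$ producing layers $L_0, L_1, \ldots, L_d$. A routine averaging argument on the cumulative sizes $|L_{\le i}|$ shows that many indices $i$ yield a balanced separator $L_i$, meaning every component of $G - L_i$ has at most $3n/4$ vertices. It therefore suffices to find one such balanced index $i$ with additionally $\alpha(G[L_i]) \le \log^{c_0} n$. Assuming for contradiction that every balanced layer has independence number exceeding $\log^{c_0} n$, the idea is to pick, in each of a long range of balanced layers, an independent transversal of size $\log^{c_0} n$, and then to prune the transversals—using Ramsey (Theorem~\ref{thm:ramsey}) together with the $K_{1,s}$-free hypothesis—so that adjacencies among the chosen representatives match exactly the edges of $W_{r\times r}$. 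Connecting the selected representatives through short BFS paths in intermediate layers should then realise $W_{r \times r}$ as an induced minor of $G$, contradicting the hypothesis.

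The main obstacle is this final ``grid from rich layers'' construction. Because we require an \emph{induced} minor, branch-sets must respect both the edges and the non-edges of $W_{r \times r}$, so we must rule out all spurious adjacencies between non-adjacent branch-sets. The $K_{1,s}$-free hypothesis is precisely what makes the pruning feasible: any neighbourhood in $G$ has independence number at most $s-1$, so Ramsey forces neighbourhoods to be locally clique-rich, and symmetrically, the ``anti-neighbourhood'' of any fixed vertex cannot host too large an independent set of bad candidates. Choosing $c_0$ large enough that, after all of this Ramsey-style pruning, at least $r$ rows and $r$ columns of valid representatives survive within $O(r)$ layers is the quantitative heart of the argument; I expect $c_0$ to come out polynomial in $s$ and $r$.
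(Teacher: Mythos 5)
Your proposal does not prove the statement in question. The statement to be established is Theorem~\ref{HtoW}: every planar graph $H$ is an induced minor of some hexagonal grid $W_{r\times r}$. What you have written is instead a sketch of the paper's main result (Theorem~\ref{thm:main}), and in your very first lines you invoke Theorem~\ref{HtoW} as a black box (``By Theorem~\ref{HtoW}, we may assume that $H = W_{r\times r}$''). The claim you were asked to prove therefore appears in your write-up only as a hypothesis, never as a conclusion, and nothing in the subsequent BFS-layering and balanced-$\alpha$-separator argument bears on why a planar graph embeds as an induced minor into a hexagonal grid.

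For the record, the paper does not prove Theorem~\ref{HtoW} either: it cites it as folklore (Theorem 12 of the reference on treewidth and Hadwiger number of induced minors). A self-contained argument would run roughly as follows: realise $H$ as a minor of a sufficiently large square grid (a standard consequence of planarity, via a planar or orthogonal drawing), transfer the branch sets to the hexagonal grid, and observe that since $W_{r\times r}$ has maximum degree $3$ and the branch sets are connected, one can route them so that no adjacency arises between branch sets of non-adjacent vertices of $H$ --- this last point is precisely what upgrades ``minor'' to ``induced minor,'' and it is the step your sketch never engages with. Separately, even read as a proof of Theorem~\ref{thm:main}, your route (recursive balanced separators of small independence number extracted from BFS layers) is entirely different from the paper's, which combines Theorems~\ref{thm:constellation}, \ref{thm:hajebi} and \ref{thm:treeaplha} with a Ramsey-based degree bound; but that comparison is moot, since the target statement was Theorem~\ref{HtoW}.
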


\paragraph{Tree decompositions.}

A tree decomposition $\mathcal{T}$ of a graph $G$ is a pair $(T,\beta)$ where $T$ is a tree and $\beta$ is a function assigning each node of $T$ a non-empty subset $V(G)$ such that the following conditions are satisfied:
\begin{enumerate}
\itemsep -.2em
    \item For each vertex $v$ of $V(G)$ a subset of nodes $\{x\in V(T)\mid v\in\beta(x)\}$ induces a  non-empty subtree;
    \item For each edge $uv$ of $E(G)$ there exists a node $x\in V(T)$ such that $u,v\in \beta(x)$.
\end{enumerate}

The \emph{width} of a tree decomposition $\mathcal{T}=(T, \beta)$ is equal to $\max_{x\in V(T)}\ |\beta(x)|-1$.
The \emph{treewidth} of a graph $G$ is a minimal width over all tree decompositions of $G$ and is denoted as $\tw(G)$.
The \emph{independence number} of a tree decomposition $\mathcal{T}=(T,\beta)$ is equal to $\max_{x\in V(T)} \alpha(\beta(x))$.
The \emph{tree-independence number} of a graph $G$ is a minimal independence number over all tree decomposition of $G$ and is denoted as $\treealpha(G)$.

\subsection{Building blocks}

The proof of  relies on three results from the literature.
We start the definitions necessary to state these results.
The first one is a theorem describing properties of graphs that contains a
large complete bipartite graph as an induced minor.

A \emph{constellation}, defined in \cite{Chudnovsky_2026},  is a graph $\mathfrak{c}$ in which there is an independent set $I_\mathfrak{c}$ such that each connected component in $\mathfrak{c}-I_\mathfrak{c}$ is a path and each vertex $v\in I_\mathfrak{c}$ has at least one neighbor in each connected component in $\mathfrak{c}-I_\mathfrak{c}$.
An \emph{$(s,\ell)$-constellation} is a constellation $\mathfrak{c}$ where $|I_\mathfrak{c}|=s$ and there are $\ell$ connected components in $\mathfrak{c}-I_\mathfrak{c}$.
 We can now state the first theorem that we need.

\begin{theorem}[{Chudnovsky, Hajebi, Spirkl~\cite[Theorem 1.3]{chudnovsky2025inducedsubgraphstreedecompositions}}]
\label{thm:constellation}
    For all $\ell,r,q\in \mathbb{N}$, there is a~constant $t \in \mathbb{N}$ such that if $G$ is a graph with an induced minor isomorphic to $K_{t,t}$, then one of the following holds.
 \begin{enumerate}
        \item There is an induced minor of $G$ isomorphic to $W_{r\times r}$.
        \item There is an $(q,\ell)$-constellation in $G$.
    \end{enumerate}
\end{theorem}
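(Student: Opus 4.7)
The plan is to start from a $K_{t,t}$ induced-minor model $(A_1,\dots,A_t;\,B_1,\dots,B_t)$ of $G$ with $t$ chosen sufficiently large as a function of $q$, $\ell$ and $r$, and to extract either the wall or the constellation via a concentration-versus-spread dichotomy on the bags. Intuitively, bags whose external adjacency is concentrated on a few internal vertices push the analysis towards a constellation, whereas bags whose adjacency is spread across many distinct internal vertices produce a grid-like pattern from which a large wall can be pulled out.

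For each bag $A_i$ define its \emph{concentration} $\pi(A_i)$ as the maximum, over $a\in A_i$, of the number of $B_j$'s to which $a$ is adjacent. If a positive fraction of the $A_i$'s satisfy $\pi(A_i)\ge T$ for a threshold $T=T(q,\ell)$, a Ramsey/pigeonhole argument yields $q$ indices $i_1,\dots,i_q$ and vertices $a_{i_k}\in A_{i_k}$ sharing a common set $J\subseteq [t]$ of $\ell$ neighbors on the $B$-side. The core task is then to trim each $B_j$, $j\in J$, to an induced path $P_j\subseteq G[B_j]$ containing, for every $k$, a neighbor of $a_{i_k}$. When this trimming succeeds, $I=\{a_{i_1},\dots,a_{i_q}\}$ is automatically independent (the $A$-bags are pairwise non-adjacent in the model), the $P_j$'s are pairwise non-adjacent induced paths, and every $a_{i_k}$ has a neighbor in every $P_j$, yielding the desired $(q,\ell)$-constellation.

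In the opposite regime most $A_i$'s have small concentration, so their adjacencies to distinct $B_j$'s are witnessed by pairwise distinct internal vertices. A symmetric analysis on the $B$-side either feeds back into a mirrored constellation construction (swapping the roles of $A$ and $B$) or yields a doubly-spread configuration. In the doubly-spread case, iterated Ramsey on the shape of induced Steiner trees inside each $A_i$ (and each $B_j$) — whose leaves are precisely the attachment vertices, so there are only boundedly many skeleton types after pigeonhole — forces the bags to be arranged into a regular pattern: for each ordered pair $(i,j)$ one locates distinct attachment vertices $a_{ij}\in A_i$ and $b_{ji}\in B_j$ joined by an edge, together with pairwise internally disjoint induced subpaths of $A_i$ linking consecutive $a_{ij}$'s, and analogously inside each $B_j$. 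Contracting each edge $a_{ij}b_{ji}$ produces a $t'\times t'$ grid as an induced minor of $G$, and for $t'$ sufficiently large this grid contains $W_{r\times r}$ as an induced minor, which is what we need.

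The main obstacle is the trimming step: turning a merely connected bag into an induced path through prescribed attachment vertices is not automatic, and dually imposing a regular grid layout on a spread bag requires substantial control of its internal topology. The proof must therefore be organised so that every structural obstruction to one outcome simultaneously witnesses the other — for instance, failure to trim a given $B_j$ to an induced path with the required attachments must force $B_j$ to have enough internal branching to be used on the wall side. Coordinating these refinements on both sides of the bipartition while preserving the induced-minor property — in particular ruling out shortcut edges between the trimmed subpaths of different bags — is the core technical difficulty and absorbs most of the Ramsey bookkeeping that determines the final value of $t$.
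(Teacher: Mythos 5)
First, a point of order: the paper you are annotating does not prove this statement at all --- it is imported verbatim as Theorem~1.3 of Chudnovsky, Hajebi and Spirkl and used as a black box, so there is no ``paper's own proof'' to compare against. Judged on its own merits, your proposal is a reasonable high-level plan (a concentration-versus-spread dichotomy on the branch sets of a $K_{t,t}$ induced-minor model is a natural opening move), but it is not a proof: both branches of the dichotomy terminate exactly where the real difficulty begins, and you say so yourself without resolving it.

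Concretely, the constellation branch hinges on trimming a connected branch set $B_j$ to an \emph{induced} path that still meets the neighbourhood of each of the $q$ chosen vertices $a_{i_1},\dots,a_{i_q}$. This can genuinely fail: if $G[B_j]$ is, say, a large star whose leaves are partitioned among the private neighbourhoods of the $a_{i_k}$'s, no induced path in $B_j$ (which has at most three vertices) meets more than a bounded number of those neighbourhoods. The entire content of the theorem is that such failures, accumulated across many branch sets, can be converted into a $W_{r\times r}$ induced minor; your sketch only asserts that this conversion ``must'' be possible. The wall branch has the symmetric problem: producing a grid as an \emph{induced} minor requires the linking paths inside each $A_i$ and $B_j$ to be induced and pairwise anticomplete except at prescribed attachments, and ``iterated Ramsey on the shape of induced Steiner trees'' does not by itself rule out shortcut edges between different subpaths of the same branch set, nor does it force the attachment vertices into a consistent linear order compatible with a grid layout. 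Until the failure-of-trimming case is actually turned into wall-building material (this is where the cited proof spends essentially all of its effort, using machinery developed across that series of papers), the argument has a genuine gap rather than a missing routine verification.
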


For $\lambda \in \N$, we say that a graph  $G$ is \emph{$\lambda$-separable} if for all pairs of vertices $u,\ v$ of $V(G)$,
which are distinct and non-adjacent,
there is no set of $\lambda$ pairwise internally disjoint paths in $G$ from $u$ to $v$.
The next result that we use is the following:

\begin{theorem}[{Hajebi~\cite[Theorem 3.2 for $\kappa=2$]{hajebi2025polynomialboundspathwidth}}]\label{thm:hajebi}
    For every planar graph $H$ and every $t$ there exists $d\in \N$ such that for all $\lambda \in \N$,
    if $G$ is a $\lambda$-separable graph with no induced minor isomorphic to $H$ or $K_{t,t}$,
    then $\tw(G)\leq (2 (\omega(G)+1))^d$.
\end{theorem}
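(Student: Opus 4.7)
The plan is to combine \cref{HtoW}, \cref{thm:constellation}, and \cref{thm:hajebi} via a recursive balanced-separator argument, inducting on the number of vertices.

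\textbf{Structural setup.} Apply \cref{HtoW} to fix $r = r(H)$ such that $H$ is an induced minor of $W_{r \times r}$. Applying \cref{thm:constellation} with this $r$, $\ell = s$, and $q = 1$ yields some $t = t(s, H)$. I claim $G$ excludes $K_{t,t}$ as an induced minor: otherwise \cref{thm:constellation} produces either a $W_{r \times r}$ induced minor, contradicting $H$-induced-minor-freeness, or a $(1,s)$-constellation, whose single independent-set vertex $v$ together with one chosen neighbor per path-component forms an induced $K_{1,s}$, contradicting $K_{1,s}$-freeness. Hence by \cref{thm:hajebi}, $\tw(G) \le (2(\omega(G)+1))^d$ for some $d = d(s, H)$.

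\textbf{Inductive argument.} I would prove by induction on $n$ that $\treealpha(G) \le \log^{c} n$ for suitable $c = c_{s,H}$, via the following lemma: every $G$ in the class has a balanced separator $S$ (each component of $G - S$ has at most $2n/3$ vertices) with $\alpha(S) \le \log^{c-1} n$. Given this lemma, the standard tree decomposition assembly---join tree decompositions of the components $G[K]$ (obtained by induction on $n$), add $S$ to every bag, and root the whole at a bag equal to $S$---yields $\treealpha(G) \le \alpha(S) + \max_K \treealpha(G[K]) \le \log^{c-1} n + \log^{c}(2n/3) \le \log^{c} n$ for $c$ large enough.

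To prove the separator lemma, I split into cases based on $\omega(G)$. If $\omega(G) \le \log^{(c-1)/d} n$, then $\tw(G) \le \log^{c-1} n$ by \cref{thm:hajebi}, and any centroid bag of a tree decomposition of this width is a balanced separator of size at most $\log^{c-1} n + 1$. Otherwise $\omega(G)$ is large; take a maximum clique $C$ of $G$. If $C$ is already a balanced separator of $G$, $\alpha(C) = 1$ suffices. Else there is a component $K$ of $G - C$ with $|K| > 2n/3$; applying the inductive hypothesis to $G[K]$ gives a balanced separator $S_K$ of $G[K]$ with $\alpha(S_K) \le \log^{c-1} |K| \le \log^{c-1} n$, and then $S := C \cup S_K$ is a balanced separator of $G$ with $\alpha(S) \le 1 + \log^{c-1} n$.

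The main obstacle will be handling the large-$\omega$ case when $|K|$ is only marginally smaller than $n$: naive induction shaves off only $|C|$ vertices at a time, potentially requiring many inductive steps and letting the $+1$ contributions from successive clique layers accumulate beyond polylog. I expect the resolution is either to choose a clique whose removal produces a balanced split within $O(\log n)$ iterations, or to amortize by arguing that $\omega$ must also strictly drop along the recursion, so that the small-$\omega$ case is reached within $O(\log n)$ rounds, keeping $\alpha(S)$ bounded by $\log^{c-1} n$.
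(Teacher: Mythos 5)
Your proposal does not address the statement it is supposed to prove. \cref{thm:hajebi} is a quantitative treewidth bound for $\lambda$-separable graphs excluding $H$ and $K_{t,t}$ as induced minors; it is an external result of Hajebi, imported as a black box, and the paper contains no proof of it to compare against. What you have written is instead an attempted proof of the paper's main result, \cref{thm:main}: your hypotheses ($K_{1,s}$-freeness, no $\lambda$-separability assumption anywhere) and your conclusion ($\treealpha(G)\leq\log^c n$ rather than $\tw(G)\leq(2(\omega(G)+1))^d$) are those of \cref{thm:main}, not of \cref{thm:hajebi}. Worse, your argument is circular with respect to the stated goal: in the ``structural setup'' you invoke \cref{thm:hajebi} itself to obtain the treewidth bound, so nothing you write could constitute a proof of that theorem. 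Proving \cref{thm:hajebi} would require engaging with its actual mechanism (roughly, that $\lambda$-separability together with the excluded induced minors forces a bounded-degree-like structure to which grid-minor-type arguments apply), none of which appears in your proposal.

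Even read charitably as a proof of \cref{thm:main}, your argument diverges from the paper's and is incomplete. The paper derives the treewidth bound $\tw(G)\leq(2\omega^s(\omega+1))^d$ exactly as in your setup paragraph (using Ramsey to bound the maximum degree by $\omega^s$ and hence establish $\omega^s$-separability), but then finishes in one line by citing \cref{thm:treeaplha}, which converts any hereditary ``$\tw\leq\mathrm{poly}(\omega\log n)$'' bound into a polylogarithmic bound on $\treealpha$. Your hand-rolled balanced-separator recursion is an attempt to reprove a special case of \cref{thm:treeaplha}, and you concede yourself that the large-$\omega$ case is unresolved: removing a maximum clique need not produce a balanced split, $\omega$ need not drop along the recursion, and the accumulated independence number of stacked clique layers is not controlled by your argument as written. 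That gap is precisely the nontrivial content of \cref{thm:treeaplha}, so you should either cite it (as the paper does) or supply a genuinely complete replacement.
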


We also need:
\begin{theorem}[Chudnovsky,  Lokshtanov, Satheeshkumar~\cite{chudnovsky2025treewidthcliqueboundednesspolylogarithmictreeindependence}]\label{thm:treeaplha}
Let $\mathcal{C}$ be a hereditary class. Then the following are equivalent:
\begin{enumerate}
    \item There exists a positive constant $c_1$ such that for every graph $G\in \mathcal{C}$ on $n \geq 3$ vertices we~have $\treealpha(G)\leq (\log n)^{c_1}$.
    \item There exists a positive constant $c_2$ such that for every graph $G\in \mathcal{C}$ on $n \geq 3$ vertices we~have $\treealpha(G)\leq (\omega(G)\log n)^{c_2}$.
    \item There exists a positive constant $c_3$ such that for every graph $G\in \mathcal{C}$ on $n \geq 3$ vertices we~have $\tw(G)\leq (\omega(G)\log n)^{c_3}$.
\end{enumerate}
\end{theorem}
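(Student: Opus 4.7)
Two of the six implications are essentially immediate. For $(1)\Rightarrow(2)$, since $\omega(G)\ge 1$ we have $(\log n)^{c_1}\le(\omega(G)\log n)^{c_1}$, so $c_2=c_1$ works. For $(3)\Rightarrow(2)$, every independent set contained in a bag has size at most the bag size, so $\treealpha(G)\le\tw(G)+1$, giving $c_2=c_3+1$. My plan is therefore to close the cycle by proving $(2)\Rightarrow(1)$ and $(1)\Rightarrow(3)$.

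\textbf{Proving $(2)\Rightarrow(1)$.} The first step is a case split on $\omega(G)$ versus $\log n$. If $\omega(G)\le\log n$, then $(\omega(G)\log n)^{c_2}\le(\log n)^{2c_2}$ and $(1)$ follows with $c_1=2c_2$. If $\omega(G)>\log n$, I would invoke the hereditary property inductively: after cleaning up the decomposition witnessing $(2)$ so that no bag equals $V(G)$, each induced subgraph $G[\beta(x)]$ lies in $\mathcal{C}$ and has fewer than $n$ vertices. The induction hypothesis then supplies a finer sub-decomposition of each bag; splicing these into the original tree should cut the tree-independence number globally, and a careful accounting ought to yield a universal exponent $c_1$ depending only on $c_2$.

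\textbf{Proving $(1)\Rightarrow(3)$.} Starting from a tree decomposition with $\alpha(\beta(x))\le(\log n)^{c_1}$, \cref{Ramsey} yields only $|\beta(x)|\le(\omega(G)+1)^{(\log n)^{c_1}}$, which is superpolylog in $n$. To upgrade this, my plan is to reformulate $(1)$ as the existence, in every induced subgraph of $G$, of an $\alpha$-balanced separator of weight at most $(\log n)^{c_1}$, and then to build the tree decomposition witnessing $(3)$ by iterated separation. At each step the separator itself lies in $\mathcal{C}$, so either $(1)$ applied internally or a tailored Ramsey-style argument restricted to the cliques that actually occur should trim the separator to a subset of size polylogarithmic in $n$ and polynomial in $\omega(G)$.

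\textbf{The main obstacle} lies in $(1)\Rightarrow(3)$: Ramsey converts an $\alpha$-bound into a size bound only exponentially, and naïve refinement of bags by re-applying $(1)$ inside them does not contract—the recurrence $a\mapsto(a\log(\omega(G)+1))^{c_1}$ on the $\alpha$-bound fails to decrease for $c_1\ge 1$. Overcoming this will require a genuine balanced-separator construction with a potential jointly tracking $|V(G)|$ and $\omega(G)$, so that only polylogarithmically many separation steps are needed and the Ramsey blow-up is paid once, at the bottom of the recursion, where both quantities are already under control.
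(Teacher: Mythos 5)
First, a point of reference: the paper does not prove this statement at all --- it is imported verbatim as \cref{thm:treeaplha} from Chudnovsky, Lokshtanov, and Satheeshkumar \cite{chudnovsky2025treewidthcliqueboundednesspolylogarithmictreeindependence}, so there is no in-paper proof to compare against. Your two ``immediate'' implications are indeed immediate: $(1)\Rightarrow(2)$ from $\omega(G)\ge 1$, and $(3)\Rightarrow(2)$ from $\treealpha(G)\le\tw(G)+1$ (modulo a harmless enlargement of the constant to absorb the $+1$ when $\omega(G)\log n$ is close to $1$). The cycle you choose to close, via $(2)\Rightarrow(1)$ and $(1)\Rightarrow(3)$, would also suffice logically. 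But neither of those two implications is actually proved: both are left as plans with explicit placeholders (``ought to yield,'' ``should trim''), and they are precisely where all of the content of the theorem lives --- together they constitute essentially the entire cited paper, which is a substantial piece of work rather than a routine verification.

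Concretely, in $(2)\Rightarrow(1)$ the case $\omega(G)>\log n$ is the whole difficulty: there $(\omega(G)\log n)^{c_2}$ can be polynomial in $n$, and the proposed splicing of sub-decompositions into the bags does not work as stated. Refining a bag $\beta(x)$ by a tree decomposition of $G[\beta(x)]$ does not in general yield a tree decomposition of $G$ (each adhesion $\beta(x)\cap\beta(y)$ must be covered by a single bag of the refinement), and even setting that aside, the bound on the new bags is again of the form $(\omega\log|\beta(x)|)^{c_2}$ with no decreasing potential, so the recursion has no reason to terminate at a polylogarithmic value. In $(1)\Rightarrow(3)$ you have correctly diagnosed the obstruction --- Ramsey converts an independence bound $a$ on a bag into a size bound only of order $(\omega+1)^a$, and the natural refinement recurrence does not contract --- but the ``genuine balanced-separator construction with a potential jointly tracking $|V(G)|$ and $\omega(G)$'' is named, not supplied. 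As it stands, the proposal establishes only the two trivial arrows together with an accurate map of where the difficulty lies; the theorem itself remains unproved.
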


\section{Proof of \cref{thm:main}}

\thmmain*
\begin{proof}
    Given $H$ and $s$, let us consider any $n$-vertex graph $G$ which is $H$-induced-minor-free and $K_{1,s}$-free. Let us denote the clique number of $G$ as $\omega$.
    Since $H$ is planar, by \cref{HtoW}  there exists $r$ such that $H$ is an induced  minor of $W_{r \times r}$.
    Thus, $G$ excludes $W_{r \times r}$ as an induced minor.

    Since $G$ is $K_{1,s}$-free, it follows that no induced subgraph of $G$ is a $(1,s)$-constellation.
    Applying \cref{thm:constellation} with $q=1$ and $\ell=s$, we deduce that there is  $t \in \N$ (that depends on $H$ and $s$ only),
    such that $G$ is  $K_{t,t}$-induced-minor-free.
     
    Denote by $\Delta$ the maximum degree of a vertex in $G$. 
    For every vertex $v\in V(G)$ there is no independent set of size $s$ or a clique of size $\omega$ inside $N(v)$. 
    Thus by \cref{Ramsey} we get that $\Delta(G)< \omega^s$.
    Since for every pair of vertices  $u,v$ in  $G$ there exist at most  $\Delta$ pairwise vertex disjoint paths from $u$ to $v$,
    it follows that $G$ is $(\Delta(G)+1)$-separable.
    Consequently, $G$ is $\omega^s$-separable
     
    Since $G$ is $\omega^s$-separable and $K_{t,t}$-induced-minor-free,
    \cref{thm:hajebi} implies that  there exists $d$ that depends only on $t$ and $H$,
    and therefore only on $s$ and $H$, such that   
     \[\tw(G)\leq \left(2 \omega^s(\omega+1)\right)^d.\]
    Finally, by \cref{thm:treeaplha} we  get that $\treealpha(G)\leq \log^c n$ where $c$ is a constant that depends only on $s$ and $H$.  
    This completes the proof.
\end{proof}

\section{Conclusion}

In this note we proved that \cref{The Conjecture} is ``morally true,'' i.e., it holds up to factors polylogaritmic in the number of vertices.
The full resolution of the conjecture is wide open.

Let us remark that Dallard et al.~\cite{dms3} made another conjecture about tree-independence number -- they suggested that every hereditary class where treewidth is bounded in terms of the clique number, has bounded tree-independence number.
This conjecture was recently refuted by Chudnovsky and Trotignon~\cite{Chudnovsky_2025}.
However, shortly after that Chudnovsky,  Lokshtanov, and Satheeshkumar~\cite{chudnovsky2025treewidthcliqueboundednesspolylogarithmictreeindependence} proved that the conjecture is ``morally true'' (again, up to polylogarithmic factors), see \cref{thm:treeaplha}.

Let us conclude the paper with recalling yet another conjecture by Dallard, Krnc, Kwon, Milani\v{c}, Munaro, \v{S}torgel, and Wiederrecht~\cite{DBLP:journals/corr/abs-2402-11222}, closely related to \cref{The Conjecture}.

\begin{conjecture}[\cite{DBLP:journals/corr/abs-2402-11222}]
Let $\mathcal{S}$ denote the family of forests where every component has at most three leaves.
For every $S \in \mathcal{S}$ and every integer $t$ there exists $c_{S,t}$ such that every graph which is $S$-induced-minor-free and $K_{t,t}$-free has tree-independence number $C_{S,t}$.
\end{conjecture}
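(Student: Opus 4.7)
The plan is to adapt the overall strategy of \cref{thm:main} to the $K_{t,t}$-free setting, while crucially leveraging the structural rigidity of forests with at most three leaves per component to strengthen the conclusion from polylogarithmic to constant. First I would reduce to the case where $S$ is connected, so that $S$ is either a path or a subdivided claw $T_{a,b,c}$; induced-minor-freeness for a disjoint union $S = S_1 \sqcup \cdots \sqcup S_k$ should follow by iterating the connected case on suitable ``blow-ups'' of the resulting tree decompositions. Since $S$ is planar, \cref{HtoW} provides an integer $r$ such that any $S$-induced-minor-free $G$ excludes $W_{r \times r}$ as an induced minor.

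The first substantive step is to upgrade the hypothesis ``$K_{t,t}$-free as an induced subgraph'' to ``$K_{t',t'}$-induced-minor-free'' for some $t' = t'(S,t)$. I would argue by contradiction: if $G$ contained $K_{t',t'}$ as an induced minor for $t'$ large enough, then \cref{thm:constellation}, together with the exclusion of $W_{r \times r}$, would yield a $(q,\ell)$-constellation in $G$ for arbitrarily large $q$ and $\ell$. A Ramsey-type cleaning of the neighborhoods of the independent-set vertices along the paths of the constellation should then produce either an induced $K_{t,t}$ (contradicting $K_{t,t}$-freeness), or a sufficiently rich family of internally disjoint long paths meeting at common independent vertices that realizes $S$ as an induced minor; the at-most-three-leaf restriction is exactly what makes this last step tractable, since a subdivided claw can be routed through three long internally disjoint paths sharing a single branching vertex.

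With $G$ now both $S$- and $K_{t',t'}$-induced-minor-free, I would aim to apply \cref{thm:hajebi}, which needs $G$ to be $\lambda$-separable with $\lambda$ bounded in $\omega(G)$ alone. In \cref{thm:main} this followed from $K_{1,s}$-freeness via Ramsey, a route unavailable here. Instead I would try to extract separability directly: if between two non-adjacent vertices $u,v$ there were $\lambda$ internally vertex-disjoint $uv$-paths with $\lambda$ much larger than any polynomial in $\omega(G)$, then a Ramsey argument on the first and last edges of these paths should yield either an induced $K_{t,t}$ or an induced-minor copy of $S$ (again using that each component of $S$ has at most three leaves, so only a modest collection of disjoint paths need be stitched together). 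Feeding this into \cref{thm:hajebi} gives a treewidth bound polynomial in $\omega(G)$ and independent of $n$.

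The hardest step, and the genuine obstacle, is passing from this picture to a \emph{constant} tree-independence bound. \cref{thm:treeaplha} only converts $\omega$-polynomial treewidth into tree-independence polylogarithmic in $n$, which is strictly weaker than the conjectured constant $c_{S,t}$. To close the gap one would need either to bound $\omega(G)$ itself by a function of $S$ and $t$ only — which, in the absence of any $K_{1,s}$-style restriction on neighborhoods, appears very difficult — or to construct a bespoke tree decomposition of $G$ whose bags directly have bounded independence number, bypassing the detour through treewidth. This constant-versus-polylog gap is, in my view, where the conjecture is genuinely open and likely demands a new structural insight into how few-leaf forests interact with dense $K_{t,t}$-free graphs.
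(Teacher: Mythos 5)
The statement you were asked about is not a theorem of the paper at all: it is an open conjecture of Dallard, Krnc, Kwon, Milani\v{c}, Munaro, \v{S}torgel, and Wiederrecht, restated in the paper's conclusion, and the paper offers no proof of it --- it only remarks that a polylogarithmic weakening is known~\cite{DBLP:journals/corr/abs-2501-14658}. Your proposal is therefore not, and cannot be judged as, an alternative to a proof in the paper; and by your own admission it does not prove the statement. The gap you identify in your final paragraph --- that \cref{thm:treeaplha} only converts an $\omega$-polynomial treewidth bound into a tree-independence bound that is polylogarithmic in $n$, not constant --- is exactly the open content of the conjecture, so the route you sketch terminates, at best, at the already-known polylogarithmic weakening rather than at the claimed constant $c_{S,t}$.

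Beyond that admitted gap, two intermediate steps are also not sound as written. First, the separability step: in the proof of \cref{thm:main}, $\lambda$-separability comes from a degree bound $\Delta(G) < \omega^s$, which is obtained by applying \cref{Ramsey} inside a single neighborhood using $K_{1,s}$-freeness. Under mere $K_{t,t}$-freeness there is no degree bound (complete multipartite graphs with parts of size $t-1$ are $K_{t,t}$-free with $\omega$ small and $\Delta$ unbounded), and your suggestion to Ramsey ``the first and last edges'' of $\lambda$ internally disjoint paths only produces a large independent set inside $N(u)$; it does not by itself yield an induced $K_{t,t}$ (which requires complete bipartiteness between two independent sets) nor an induced-minor copy of $S$ (which requires controlling all chords among the paths). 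Second, the upgrade from $K_{t,t}$-induced-subgraph-freeness to $K_{t',t'}$-induced-minor-freeness via constellations is plausible in spirit but is itself the technical heart of the cited work, not a routine Ramsey cleaning. In short: the conjecture remains open, and your proposal correctly locates, but does not bridge, the gap between the polylogarithmic bound and the conjectured constant.
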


Interestingly, as shown by Chudnovsky et al.~\cite{DBLP:journals/corr/abs-2501-14658}, this conjecture is also ``morally true,'' i.e., all such graphs have tree-independence number polylogarithmic in the number of vertices.

\bibliographystyle{plain}
\bibliography{bibliography}

\end{document}